\newtheorem{theorem}{Theorem}[section]
\newtheorem{cor}[theorem]{Corollary}
\newtheorem{ex}[theorem]{Exercise}
\begin{document}

\title[Uniform Space Duality]
{A Duality Between Non-Archimedean Uniform Spaces and Subdirect Powers of Full Clones}
\author{Joseph Van Name}
\address{Department of Mathematics and Statistics\\
University of South Florida\\4202 E. Fowler Avenue Tampa, FL 33620\\USA}
\email{jvanname@mail.usf.edu}

\subjclass{Primary: 08C20; Secondary: 54E15, 08B99}
\keywords{Uniform Space Duality, Hyperspace,Variety}

%\date{}
%\dedicatory{}
%\commby{}%
% ----------------------------------------------------------------
\begin{abstract}
A uniform space is said to be non-Archimedean if it is generated by
equivalence relations. If $\lambda$ is a cardinal, then a non-Archimedean
uniform space $(X,\mathcal{U})$ is $\lambda$-totally bounded if each
equivalence relation in $\mathcal{U}$ partitions $X$ into less than $\lambda$
blocks. If $A$ is an infinite set, then let $\Omega(A)$ be the algebra with
universe $A$ and where each $a\in A$ is a fundamental constant and every finitary function
is a fundamental operation. We shall give a duality between
complete non-Archimedean $|A|^{+}$-totally bounded uniform spaces and
subdirect powers of $\Omega(A)$. We shall apply this duality to characterize the algebras
dual to supercomplete non-Archimedean uniform spaces. 
\end{abstract}
\maketitle
\section{Non-Archimedean Uniform Space Duality}

In this paper, we shall assume basic facts about uniform spaces and universal algebra. The reader is referred to \cite{I} or \cite{J}
for information about uniform spaces and to \cite{B} for universal algebra. We shall use the entourage definition of uniform spaces, and we shall
assume all complete uniform spaces are separated. If $\mathcal{A}$ is an algebra, then we shall write
$V(\mathcal{A})$ for the variety generated by $\mathcal{A}$.

In \cite{S}, Marshall Stone constructed a duality between compact totally
disconnected spaces and Boolean algebras. This result revolutionized the theory of Boolean
algebras since it gives a way to represent Boolean algebras as topological spaces. We shall give an
analogous result for uniform spaces.

A uniform space $(X,\mathcal{U})$ is said to be \emph{non-Archimedean} if $\mathcal{U}$ is generated
by equivalence relations. We say that a non-Archimedean uniform space $(X,\mathcal{U})$ is $\lambda$-totally bounded if whenever
$E\in\mathcal{U}$ is an equivalence relation, then $E$ partitions $X$ into less than
$\lambda$ blocks. Clearly, if $(X,\mathcal{U})$ is $\lambda$-totally bounded, then each
subspace of $X$ is $\lambda$-totally bounded as well.

For this paper, let $A$ be a fixed infinite set. For each $a\in A$, let $\hat{a}$ be a constant symbol.
For each $f\colon A^{n}\to A$, let $\hat{f}$ be an $n$-ary function symbol. Let
$\mathcal{F}=\{\hat{a}|a\in A\}\cup\bigcup_{n}\{\hat{f}|f\colon A^{n}\to A\}$. Let
$\Omega(A)$ be the algebra of type $\mathcal{F}$ and with universe $A$ where
$\hat{a}^{\Omega(A)}=a$ for $a\in A$ and where $\hat{f}^{\Omega(A)}=f$ for $f\colon A^{n}\to A$.
Therefore every $n$-ary function on $A$ is given by a function symbol, so we can regard $\Omega(A)$
as the full clone of $A$. We shall now give a duality between subdirect powers of
$\Omega(A)$ and complete non-Archimedean $|A|^{+}$-totally bounded uniform spaces. With this duality,
every complete non-Archimedean uniform space can be represented algebraically simply by letting $|A|$
be at least as large as every uniform partition. 

The algebra $\Omega(A)$ and the variety $V(\Omega(A))$ generated by $\Omega(A)$ have applications
to mathematics besides uniform space duality. For instance,
the variety $V(\Omega(A))$ is related to the ultrapower construction and reduced power construction. In fact, one
can construct ultrapowers and reduced powers from elements of the variety $V(\Omega(A))$.
Also, the first order theory $\textrm{Th}(\Omega(A))$ of $\Omega(A)$ is appealing since it is generated by the identities in $\Omega(A)$ and
a single sentence. In other words, there is a $\phi\in\textrm{Th}(\Omega(A))$ such that for each $\theta\in\textrm{Th}(\Omega(A))$, there are identities
$I_{1},\dots,I_{n}\in\textrm{Th}(\Omega(A))$ such that $(\phi\wedge I_{1}\wedge\dots\wedge I_{n})\rightarrow\theta$.

The algebra $\Omega(A)$ serves as an infinite analogue of the two element Boolean
algebra $B$ since in $B$ every function can be represented as a combination of the Boolean operations $\wedge,\vee,'$.
Therefore the variety $V(\Omega(A))$ is analogous to the variety of Boolean algebras.
The category of compact totally disconnected spaces is isomorphic
to the category of complete non-Archimedean $\aleph_{0}$-totally bounded uniform spaces. 
Therefore it should be possible to reconstruct a duality between compact totally disconnected spaces and
the variety of Boolean algebras, but for simplicity we shall only consider the variety $V(\Omega(A))$
when $A$ is infinite.

An algebra $\mathcal{L}\in V(\Omega(A))$ shall be called \emph{partitionable} if there
is an injective homomorphism $\phi\colon\mathcal{L}\to\Omega(A)^{I}$ for some
set $I$. Clearly, the products and subspaces of partitionable algebras are partitionable.
Furthermore, each partitionable algebra is isomorphic to a subdirect product of $\Omega(A)$ since
each $a\in A$ is a constant in $\Omega(A)$.

Let $Z(\mathcal{L})$ be the collection of all homomorphisms $\phi\colon\mathcal{L}\to\Omega(A)$. 
In this paper, the set $A$ will always have the discrete uniformity. Now give $A^{\mathcal{L}}$ the product uniformity.
Then the topology on $A$ is the discrete topology and the
topology on $A^{\mathcal{L}}$ is the product topology. Give $Z(\mathcal{L})\subseteq A^{\mathcal{L}}$
the subspace uniformity. Then $Z(\mathcal{L})$ is a closed subspace
of $A^{\mathcal{L}}$ since every convergent net $(\phi_{d})_{d\in D}$ in $Z(\mathcal{L})$ converges to some
$\phi\in Z(\mathcal{L})$. Thus, since $Z(\mathcal{L})$ is a closed subspace of a complete uniform space,
$Z(\mathcal{L})$ is complete.

Let $\ell_{1},\dots,\ell_{n}\in\mathcal{L}$.
Then let $\mathcal{E}^{\sharp}_{\ell_{1},\dots,\ell_{n}}$ be the equivalence relation $A^{\mathcal{L}}$
where for $r,s\in A^{\mathcal{L}}$ we have
$(r,s)\in\mathcal{E}^{\sharp}_{\ell_{1},\dots,\ell_{n}}$ if and only if 
$r(\ell_{1})=s(\ell_{1}),\dots,r(\ell_{n})=s(\ell_{n})$. Then the equivalence
relations $\mathcal{E}^{\sharp}_{\ell_{1},\dots,\ell_{n}}$ generate the uniformity on
$A^{\mathcal{L}}$. Take note that each $\mathcal{E}^{\sharp}_{\ell_{1},\dots,\ell_{n}}$ partitions
$A^{\mathcal{L}}$ into $|A|^{n}=|A|$ blocks, so the uniform space $A^{\mathcal{L}}$ is $|A|^{+}$-totally bounded.
Let $\mathcal{E}_{\ell_{1},\dots,\ell_{n}}$ be the restriction of
$\mathcal{E}^{\sharp}_{\ell_{1},\dots,\ell_{n}}$ to $Z(\mathcal{L})$. Then the equivalence relations
$\mathcal{E}_{\ell_{1},\dots,\ell_{n}}$ generate the uniformity on $Z(\mathcal{L})$. In particular,
$Z(\mathcal{L})$ is a $|A|^{+}$-totally bounded non-Archimedean uniform space.

Let $(X,\mathcal{U})$ be a uniform space. Then let $\mathfrak{B}_{A}(X,\mathcal{U})$ be the
collection of all uniformly continuous mappings from $X$ to $A$. Clearly $\mathfrak{B}_{A}(X,\mathcal{U})$ is
a subdirect product of $\Omega(A)$, so $\mathfrak{B}_{A}(X,\mathcal{U})$ is a partitionable algebra.

If $(X,\mathcal{U})$ is a uniform space, then for each $x\in X$, we have
$\pi_{x}\colon\mathfrak{B}_{A}(X,\mathcal{U})\to\Omega(A)$ be a homomorphism where $\pi_{x}$ is
the projection mapping defined by $\pi_{x}(f)=f(x)$.
Therefore define a mapping $\mathcal{C}\colon(X,\mathcal{U})\to
Z(\mathfrak{B}_{A}(X,\mathcal{U}))$ by $\mathcal{C}(x)=\pi_{x}$. In other words, if
$x\in X$, and $f\colon(X,\mathcal{U})\to A$ is uniformly continuous, then
$\mathcal{C}(x)f=f(x)$. If there is any confusion about the space $(X,\mathcal{U})$, then we shall write
$\mathcal{C}_{(X,\mathcal{U})}$ for the mapping $\mathcal{C}$.

Now let $\mathcal{L}\in V(\Omega(A))$. If $\ell\in\mathcal{L}$, then let $\ell^{\star}\colon Z(\mathcal{L})\to A$
be the mapping defined by $\ell^{\star}(\phi)=\phi(\ell)$. We claim that $\ell^{\star}$ is
uniformly continous. Assume that $(\phi,\theta)\in\mathcal{E}_{\ell}$. Then
$\phi(\ell)=\theta(\ell)$, so $\ell^{\star}(\phi)=\ell^{\star}(\theta)$, and hence
$(\ell^{\star}(\phi),\ell^{\star}(\theta))\in E$ for each equivalence relation $E$ on $A$.
Therefore $\ell^{\star}$ is uniformly continuous, so $\ell^{\star}\in\mathfrak{B}_{A}(Z(\mathcal{L}))$.
In light of the above discussion, we define a function $\rho\colon\mathcal{L}\to\mathfrak{B}_{A}(Z(\mathcal{L}))$ by
$\rho(\ell)=\ell^{\star}$. Therefore $\rho(\ell)(\phi)=\phi(\ell)$ for $\phi\in Z(\mathcal{L}),\ell\in\mathcal{L}$.
We will write $\rho_{\mathcal{L}}$ for the mapping $\rho$ to specify the domain of $\rho$ in case there may be confusion.
\begin{ex}
If $f\colon A^{n}\to A$ is injective (surjective), then $\hat{f}^{\mathcal{L}}\colon\mathcal{L}^{n}\to\mathcal{L}$
is injective (surjective) for each $\mathcal{L}\in V(\Omega(A))$.
\end{ex}
\begin{theorem}
The equivalence relations $\mathcal{E}_{\ell}$ generate the uniformity on $Z(\mathcal{L})$.
\end{theorem}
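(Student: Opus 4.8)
The plan is to reduce the statement to the fact, already established in the discussion above, that the finitely-indexed equivalence relations $\mathcal{E}_{\ell_1,\dots,\ell_n}$ form a base for the uniformity on $Z(\mathcal{L})$. It then suffices to show that each such $\mathcal{E}_{\ell_1,\dots,\ell_n}$ is in fact \emph{equal} to a single relation $\mathcal{E}_\ell$ for a suitably chosen $\ell\in\mathcal{L}$; the single-element relations will then constitute a base and generate the same uniformity.

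The key observation is that, since $A$ is infinite, we have $|A^n|=|A|$, so there is an injection $f\colon A^n\to A$. Given $\ell_1,\dots,\ell_n\in\mathcal{L}$, I would set $\ell=\hat{f}^{\mathcal{L}}(\ell_1,\dots,\ell_n)\in\mathcal{L}$ and claim $\mathcal{E}_\ell=\mathcal{E}_{\ell_1,\dots,\ell_n}$. The crucial computation uses that every $\phi\in Z(\mathcal{L})$ is a homomorphism, so that
\[
\phi(\ell)=\phi\bigl(\hat{f}^{\mathcal{L}}(\ell_1,\dots,\ell_n)\bigr)=\hat{f}^{\Omega(A)}(\phi(\ell_1),\dots,\phi(\ell_n))=f(\phi(\ell_1),\dots,\phi(\ell_n)),
\]
and likewise for $\theta$.

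From here both inclusions are immediate. If $(\phi,\theta)\in\mathcal{E}_{\ell_1,\dots,\ell_n}$, then $\phi(\ell_i)=\theta(\ell_i)$ for each $i$, and applying $f$ to both tuples gives $\phi(\ell)=\theta(\ell)$, so $(\phi,\theta)\in\mathcal{E}_\ell$. Conversely, if $(\phi,\theta)\in\mathcal{E}_\ell$, then $f(\phi(\ell_1),\dots,\phi(\ell_n))=f(\theta(\ell_1),\dots,\theta(\ell_n))$, and the injectivity of $f$ forces $\phi(\ell_i)=\theta(\ell_i)$ for each $i$, i.e. $(\phi,\theta)\in\mathcal{E}_{\ell_1,\dots,\ell_n}$. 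Hence $\mathcal{E}_\ell=\mathcal{E}_{\ell_1,\dots,\ell_n}$.

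Finally I would assemble the pieces: the family $\{\mathcal{E}_{\ell_1,\dots,\ell_n}\}$ is a base for the uniformity on $Z(\mathcal{L})$, and each of its members coincides with some $\mathcal{E}_\ell$, so $\{\mathcal{E}_\ell\mid\ell\in\mathcal{L}\}$ is already a base and generates the uniformity. I do not anticipate a serious obstacle. The only two points requiring care are the existence of the injection $f$, which is precisely where the hypothesis that $A$ is infinite enters, and the observation that the nontrivial inclusion $\mathcal{E}_\ell\subseteq\mathcal{E}_{\ell_1,\dots,\ell_n}$ depends on $f$ being injective rather than merely surjective.
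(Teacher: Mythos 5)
Your proposal is correct and follows essentially the same route as the paper: both encode a tuple $\ell_1,\dots,\ell_n$ into a single element $\ell=\hat{f}^{\mathcal{L}}(\ell_1,\dots,\ell_n)$ via an injection $f\colon A^n\to A$ (which exists since $A$ is infinite) and use injectivity to get $\mathcal{E}_\ell\subseteq\mathcal{E}_{\ell_1,\dots,\ell_n}$. The only difference is cosmetic: you prove the full equality $\mathcal{E}_\ell=\mathcal{E}_{\ell_1,\dots,\ell_n}$, while the paper records only the inclusion $\mathcal{E}_\ell\subseteq\mathcal{E}_{\ell_1,\dots,\ell_n}$, which already suffices for the generation claim (the reverse inclusion is immediate anyway, since $\phi(\ell)$ is determined by $\phi(\ell_1),\dots,\phi(\ell_n)$).
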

\begin{proof}
Assume that $\ell_{1},\dots,\ell_{n}\in\mathcal{L}$. Let $i\colon A^{n}\to A$ be injective.
Then $\hat{i}^{\mathcal{L}}$ is also injective. Now let $\ell=\hat{i}^{\mathcal{L}}(\ell_{1},\dots,\ell_{n})$.
Assume $\phi,\theta\in Z(\mathcal{L})$ and $(\phi,\theta)\in\mathcal{E}_{\ell}$. Then
$\phi(\ell)=\theta(\ell)$, so $\phi(\hat{i}^{\mathcal{L}}(\ell_{1},\dots,\ell_{n}))=
\theta(\hat{i}^{\mathcal{L}}(\ell_{1},\dots,\ell_{n}))$. Therefore,
$i(\phi(\ell_{1}),\dots,\phi(\ell_{n}))=i(\theta(\ell_{1}),\dots,\theta(\ell_{n}))$, so since
$i$ is injective, we have $\phi(\ell_{1})=\theta(\ell_{1}),\dots,\phi(\ell_{n})=\theta(\ell_{n})$,
thus $(\phi,\theta)\in\mathcal{E}_{\ell_{1},\dots,\ell_{n}}$. In other words, we have 
$\mathcal{E}_{\ell}\subseteq\mathcal{E}_{\ell_{1},\dots,\ell_{n}}$. Therefore the equivalence relations
$\mathcal{E}_{\ell}$ generate the uniformity on $Z(\mathcal{L})$.
\end{proof}

\begin{theorem}
1. Let $\mathcal{L}\in V(\Omega(A))$. Then $\rho\colon\mathcal{L}\to
\mathfrak{B}_{A}(Z(\mathcal{L}))$ is a surjective homomorphism, and $\rho$ is an isomorphism
if and only if $\mathcal{L}$ is partitionable.

2. If $(X,\mathcal{U})$ is a uniform space, then the mapping
$\mathcal{C}\colon(X,\mathcal{U})\to Z(\mathfrak{B}_{A}(X,\mathcal{U}))$ is uniformly continuous
and $\mathcal{C}''(X)$ is dense in $Z(\mathfrak{B}_{A}(X,\mathcal{U}))$. If $(X,\mathcal{U})$ is separated and
non-Archimedean, then $\mathcal{C}$ is injective. If $(X,\mathcal{U})$
is separated non-Archimedean and $|A|^{+}$-totally bounded, then $\mathcal{C}$ is an embedding.
If $(X,\mathcal{U})$ is complete non-Archimedean and $|A|^{+}$-totally bounded, then
$\mathcal{C}$ is an isomorphism.
\end{theorem}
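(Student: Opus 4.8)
The plan is to treat Part~1 and Part~2 in turn, in each case reducing statements about the uniformity on $Z(-)$ to a single equivalence relation via the preceding theorem, and exploiting that $V(\Omega(A))$ is closed under every finitary operation on $A$. For Part~1 I would first check that $\rho$ is a homomorphism by a pointwise computation: for a symbol $\hat f$ with $f\colon A^{n}\to A$ and $\ell_{1},\dots,\ell_{n}\in\mathcal L$, evaluating $\rho(\hat f^{\mathcal L}(\ell_{1},\dots,\ell_{n}))$ at any $\phi\in Z(\mathcal L)$ gives $\phi(\hat f^{\mathcal L}(\ell_{1},\dots,\ell_{n}))=f(\phi(\ell_{1}),\dots,\phi(\ell_{n}))$ because $\phi$ is a homomorphism, which is exactly $\hat f$ applied coordinatewise to $\rho(\ell_{1}),\dots,\rho(\ell_{n})$; constants are handled the same way. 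The central step is surjectivity. Given a uniformly continuous $g\colon Z(\mathcal L)\to A$, since $A$ is discrete and the $\mathcal E_{\ell}$ generate the uniformity, there is a single $\ell\in\mathcal L$ with $(\phi,\theta)\in\mathcal E_{\ell}\Rightarrow g(\phi)=g(\theta)$; that is, $g$ factors as $g=\tilde h\circ\ell^{\star}$ for some $\tilde h\colon A\to A$ (extend $\tilde h$ off the range of $\ell^{\star}$ arbitrarily). Then $\rho(\hat{\tilde h}^{\mathcal L}(\ell))=g$, so $\rho$ is onto. Since $\rho$ is always a surjective homomorphism, it is an isomorphism iff it is injective, and $\rho$ is injective exactly when the homomorphisms in $Z(\mathcal L)$ separate the points of $\mathcal L$; this is equivalent to $\mathcal L$ being a subdirect power of $\Omega(A)$, i.e.\ partitionable. (If $\rho$ is injective then $\mathcal L\cong\mathfrak B_{A}(Z(\mathcal L))\le\Omega(A)^{Z(\mathcal L)}$; conversely an embedding $\mathcal L\to\Omega(A)^{I}$ supplies separating homomorphisms.)

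For Part~2, uniform continuity of $\mathcal C$ is immediate: a basic entourage $\mathcal E_{f}$ on $Z(\mathfrak B_{A}(X))$ pulls back under $\mathcal C\times\mathcal C$ to $\{(x,y):f(x)=f(y)\}$, which is an entourage of $X$ because $f$ is uniformly continuous into the discrete space $A$. Density is the second main step. I would fix $\psi\in Z(\mathfrak B_{A}(X))$ and a basic neighbourhood determined by some $f\in\mathfrak B_{A}(X)$, and must produce $x$ with $f(x)=\psi(f)$, i.e.\ show $\psi(f)$ lies in the range $R$ of $f$. If it did not, I would choose $v\colon A\to A$ fixing $R$ pointwise but moving $\psi(f)$; then $v\circ f=f$ as elements of $\mathfrak B_{A}(X)$, yet applying the homomorphism $\psi$ gives $v(\psi(f))=\psi(f)$, a contradiction. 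Hence $\mathcal C''(X)$ meets every basic neighbourhood and is dense.

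The remaining implications use separation, total boundedness, and completeness in turn. If $X$ is separated non-Archimedean and $x\neq y$, I would pick an equivalence relation $E\in\mathcal U$ with $(x,y)\notin E$ and let $f$ be the two-valued map separating the $E$-class of $x$ from its complement; then $f(x)\neq f(y)$, so $\mathcal C$ is injective. If in addition $X$ is $|A|^{+}$-totally bounded, every equivalence relation $E\in\mathcal U$ has at most $|A|$ classes, so there is an $f\in\mathfrak B_{A}(X)$ that is injective on classes; then $(\mathcal C\times\mathcal C)^{-1}(\mathcal E_{f})=E$ exactly, which shows the uniformity of $X$ is the initial uniformity induced by $\mathcal C$, so $\mathcal C$ is an embedding. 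Finally, if $X$ is complete (hence separated) non-Archimedean and $|A|^{+}$-totally bounded, then $\mathcal C$ is an embedding onto a complete, hence closed, subspace of the separated space $Z(\mathfrak B_{A}(X))$; being also dense, the image is everything, so $\mathcal C$ is an isomorphism.

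I anticipate that the two genuine obstacles are surjectivity of $\rho$ and density of $\mathcal C''(X)$, both of which hinge on the full-clone hypothesis: the former through closure under an arbitrary unary (or $n$-ary) operation to realize a uniformly continuous map as some $\ell^{\star}$, and the latter through the identity $v\circ f=f$ to force $\psi(f)$ into the range of $f$. The remaining steps should be routine once one systematically replaces basic entourages by single equivalence relations via the preceding theorem and uses that $|A|^{+}$-total boundedness supplies exactly enough values in $A$ to encode each uniform partition.
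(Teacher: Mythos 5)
Your proposal is correct and takes essentially the same route as the paper's own proof: surjectivity of $\rho$ by factoring a uniformly continuous map through a single $\ell^{\star}$ using the base of entourages $\mathcal{E}_{\ell}$, density of $\mathcal{C}''(X)$ via a unary map fixing the range of $f$ to force $\psi(f)$ into that range (the paper's $i$ moves only the single point $a=\psi(f)$, a cosmetic difference), and the same separation, $|A|$-valued class-encoding, and dense-complete-image arguments for injectivity, embedding, and isomorphism.
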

\begin{proof}
1. If $\ell\in\mathcal{L}$, then we have $\rho(\ell)=(\rho(\ell)(\phi))_{\phi\in Z(\mathcal{L})}=(\phi(\ell))_{\phi\in Z(\mathcal{L})}$.
Therefore $\rho$ is a homomorphism since $\rho$ is a homomorphism in each coordinate.

To prove surjectivity, assume that $f\colon Z(\mathcal{L})\to A$ is uniformly continuous.
Then there is an $\ell\in\mathcal{L}$ where if $(\phi,\theta)\in\mathcal{E}_{\ell}$, then
$f(\phi)=f(\theta)$. In other words, if $\phi(\ell)=\theta(\ell)$, then $f(\phi)=f(\theta)$.
Therefore there is a function $g\colon A\to A$ where $f(\phi)=g(\phi(\ell))$ whenever $\phi\in Z(\mathcal{L})$.
Furthermore, we have $f(\phi)=g(\phi(\ell))=\phi(\hat{g}^{\mathcal{L}}(\ell))=\rho(\hat{g}^{\mathcal{L}}(\ell))(\phi)$ for each
$\phi\in Z(\mathcal{L})$. Therefore $\rho(\hat{g}^{\mathcal{L}}(\ell))=f$. Thus the mapping $\rho$ is surjective.

Now assume $\mathcal{L}$ is partitionable. Then for each pair of distinct $\ell_{1},\ell_{2}\in\mathcal{L}$ there is
a homomorphism $\phi\colon\mathcal{L}\to A$ with $\rho(\ell_{1})(\phi)=\phi(\ell_{1})\neq\phi(\ell_{2})=\rho(\ell_{2})(\phi)$.
Therefore $\rho(\ell_{1})\neq\rho(\ell_{2})$. We conclude that $\rho$ is injective. 
Likewise, if we assume $\rho$ is an isomorphism, then since $\mathfrak{B}_{A}(Z(\mathcal{L}))$ is partitionable, we have
$\mathcal{L}$ be partitionable as well.

2. Since $\mathcal{C}\colon(X,\mathcal{U})\to Z(\mathfrak{B}_{A}(X,\mathcal{U}))\subseteq A^{\mathfrak{B}_{A}(X,\mathcal{U})}$, we have
$\mathcal{C}$ be uniformly continuous if and only if $\mathcal{C}$ is uniformly continuous in every coordinate $f\in\mathfrak{B}_{A}(X,\mathcal{U})$.
However, we have $\mathcal{C}(x)=(\mathcal{C}(x)(f))_{f\in\mathfrak{B}_{A}(X,\mathcal{U})}=(f(x))_{f\in\mathfrak{B}_{A}(X,\mathcal{U})}$, so
$\mathcal{C}$ is uniformly continuous.

We shall now show that $\mathcal{C}''(X)$ is dense in $Z(\mathfrak{B}_{A}(X,\mathcal{U}))$.
The uniformity on $Z(\mathfrak{B}_{A}(X,\mathcal{U}))$ is generated by the
equivalence relations $\mathcal{E}_{f}$ where $f\in\mathfrak{B}_{A}(X,\mathcal{U})$.
The blocks in the equivalence relation $\mathcal{E}_{f}$ are the nonempty sets of the form
$U_{f,a}=\{\phi\in Z(\mathfrak{B}_{A}(X,\mathcal{U}))|\phi(f)=a\}$. Therefore it suffices to show
that $\mathcal{C}''(X)$ intersects each non-empty block $U_{f,a}$.

Now assume that $U_{f,a}$ is non-empty. Then there is a $\phi\in Z(\mathfrak{B}_{A}(X,\mathcal{U}))$ with
$\phi(f)=a$. We claim that $f(x)=a$ for some $x\in X$. Therefore, assume that $f(x)\neq a$ for all
$x\in X$. Let $i\colon A\to A$ be a mapping where $i(a)\neq a$ and $i(b)=b$ for $b\neq a$.
Then we have $f=i\circ f=\hat{i}^{\mathfrak{B}_{A}(X,\mathcal{U})}(f)$, so
$\phi(f)=\phi(\hat{i}^{\mathfrak{B}_{A}(X,\mathcal{U})}(f))=i(\phi(f))\neq a$. Thus, by
contrapositive, if $\phi(f)=a$, then $f(x)=a$ for some $x\in X$. However, we have
$\mathcal{C}(x)(f)=f(x)=a$, so $\mathcal{C}(x)\in U_{f,a}$. Therefore $\mathcal{C}''(X)$ is
dense in $Z(\mathfrak{B}_{A}(X,\mathcal{U}))$.

Now assume that $(X,\mathcal{U})$ is separated and non-Archimedean.
Then we shall show that $\mathcal{C}$ is injective.
Assume that $x,y\in X,x\neq y$. Then since $(X,\mathcal{U})$
is separated and non-Archimedean, there is a uniformly continuous function $f\colon X\to A$
such that $f(x)\neq f(y)$. Therefore $\mathcal{C}(x)(f)=f(x)\neq f(y)=\mathcal{C}(y)(f)$, and hence
 $\mathcal{C}(x)\neq\mathcal{C}(y)$. We conclude that $\mathcal{C}$ is injective.

Now assume that $(X,\mathcal{U})$ is separated, non-Archimedean, and $|A|^{+}$-totally bounded.
Then we shall show that $\mathcal{C}$ is an embedding. Assume that $E\in\mathcal{U}$ is an equivalence relation.
Then since $(X,\mathcal{U})$ is $|A|^{+}$-totally bounded, there is a function $f\colon X\to A$ where
$f(x)=f(y)$ if and only if $(x,y)\in E$. Clearly $f$ is uniformly continuous, so
$f\in\mathfrak{B}_{A}(X,\mathcal{U})$ and $\mathcal{E}_{f}$ is an equivalence relation on $Z(\mathfrak{B}_{A}(X,\mathcal{U}))$. Now assume that $x,y\in X$. Then $(x,y)\in E$ if and only if
$f(x)=f(y)$ if and only if $\mathcal{C}(x)(f)=\mathcal{C}(y)(f)$ if and only if $(\mathcal{C}(x),\mathcal{C}(y))\in\mathcal{E}_{f}$.
Therefore $\mathcal{C}$ is an embedding.

If $(X,\mathcal{U})$ is complete, non-Archimedean, and $|A|^{+}$-totally bounded, then we have
$\mathcal{C}$ be an embedding, and $Z(\mathfrak{B}_{A}(X,\mathcal{U}))$ is the completion of
$\mathcal{C}''(X)$. However, if $X$ is complete, we have $\mathcal{C}''(X)=
Z(\mathfrak{B}_{A}(X,\mathcal{U}))$. Therefore, in this case, $\mathcal{C}$ is a uniform homeomorphism.
\end{proof}

Let $\mathcal{L},\mathcal{M}\in V(\Omega(A))$ and assume that $\phi\colon\mathcal{L}\to\mathcal{M}$
is a homomorphism. Then let $Z(\phi)\colon Z(\mathcal{M})\to Z(\mathcal{L})$ be the function defined by
$Z(\phi)(\theta)=\theta\circ\phi$ for homomorphisms $\theta\colon\mathcal{M}\to A$. One can easily show that
the mappings $Z(\phi)$ are uniformly continuous and $Z$ is a functor from the variety $V(\Omega(A))$ to
the category of uniform spaces. Now assume that $(X,\mathcal{U}),(Y,\mathcal{V})$ are uniform spaces and
$f\colon(X,\mathcal{U})\to(Y,\mathcal{V})$ is uniformly continuous. Then define a mapping
$\mathfrak{B}_{A}(f)\colon\mathfrak{B}_{A}(Y,\mathcal{V})\to\mathfrak{B}_{A}(X,\mathcal{U})$
by $\mathfrak{B}_{A}(f)(g)=g\circ f$. Then each $\mathfrak{B}_{A}(f)$ is a homomorphism. Furthermore,
$\mathfrak{B}_{A}$ gives a functor from the category of uniform spaces to the variety $V(\Omega(A))$.

\begin{theorem}
\begin{enumerate}
\item Let $f\colon(X,\mathcal{U})\to(Y,\mathcal{V})$ be uniformly continuous. Then
$Z(\mathfrak{B}_{A}(f))\circ\mathcal{C}_{(X,\mathcal{U})}=\mathcal{C}_{(Y,\mathcal{V})}\circ f$.

$$\begin{CD}
(X,\mathcal{U})    		 		@>f>>    															(Y,\mathcal{V})\\
@VV\mathcal{C}V														@VV\mathcal{C}V\\
Z(\mathfrak{B}_{A}(X,\mathcal{U})) @>Z(\mathfrak{B}_{A}(f))>>   Z(\mathfrak{B}_{A}(Y,\mathcal{V}))
\end{CD}$$

\item Let $\phi\colon\mathcal{L}\to\mathcal{M}$ be a homomorphism. Then we have
$\mathfrak{B}_{A}(Z(\phi))\circ\rho_{\mathcal{L}}=\rho_{\mathcal{M}}\circ\phi$.

$$\begin{CD}
\mathcal{L}    								@>\phi>>    															\mathcal{M}\\
@VV\rho V														    @VV\rho V\\
\mathfrak{B}_{A}(Z(\mathcal{L}))	 @>\mathfrak{B}_{A}(Z(\phi))>>   \mathfrak{B}_{A}(Z(\mathcal{M}))
\end{CD}$$

\item The pair of functions $Z(\rho_{\mathcal{L}})\colon Z(\mathfrak{B}_{A}(Z(\mathcal{L})))\to Z(\mathcal{L})$
and $\mathcal{C}_{Z(\mathcal{L})}\colon Z(\mathcal{L})\to Z(\mathfrak{B}_{A}(Z(\mathcal{L})))$ are
inverses.

\item The pair of functions $\mathfrak{B}_{A}(\mathcal{C}_{(X,\mathcal{U})})\colon\mathfrak{B}_{A}(Z(\mathfrak{B}_{A}(X,\mathcal{U})))
\to\mathfrak{B}_{A}(X,\mathcal{U})$ and $\rho_{\mathfrak{B}_{A}(X,\mathcal{U})}
\colon\mathfrak{B}_{A}(X,\mathcal{U})\to\mathfrak{B}_{A}(Z(\mathfrak{B}_{A}(X,\mathcal{U})))$ are
inverses.

\end{enumerate}
\end{theorem}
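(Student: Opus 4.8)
The plan is to establish all four statements by unwinding the definitions of $\mathcal{C}$, $\rho$, $Z(-)$, and $\mathfrak{B}_{A}(-)$ and then chasing a single element through each composite. Statements (1) and (2) are the two naturality squares and require nothing beyond direct computation; statements (3) and (4) are the unit--counit identities of the duality, and their substance lies in one composite of each pair, where the surjectivity of $\rho$ from Theorem 1.3 is the essential tool.

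For (1), I would fix $x\in X$ and evaluate both composites at an arbitrary $g\in\mathfrak{B}_{A}(Y,\mathcal{V})$. On one side $Z(\mathfrak{B}_{A}(f))(\mathcal{C}_{(X,\mathcal{U})}(x))=\pi_{x}\circ\mathfrak{B}_{A}(f)$ sends $g$ to $\pi_{x}(g\circ f)=g(f(x))$, while on the other side $\mathcal{C}_{(Y,\mathcal{V})}(f(x))=\pi_{f(x)}$ sends $g$ to $g(f(x))$; since the two homomorphisms agree on every $g$, they coincide. Statement (2) is formally dual: fixing $\ell\in\mathcal{L}$ and an arbitrary $\theta\in Z(\mathcal{M})$, I would compute $\mathfrak{B}_{A}(Z(\phi))(\ell^{\star})(\theta)=\ell^{\star}(\theta\circ\phi)=\theta(\phi(\ell))=(\phi(\ell))^{\star}(\theta)$, which is exactly $\rho_{\mathcal{M}}(\phi(\ell))(\theta)$.

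For (3) I would treat the two composites separately. The composite $Z(\rho_{\mathcal{L}})\circ\mathcal{C}_{Z(\mathcal{L})}$ sends $\phi\in Z(\mathcal{L})$ to $\pi_{\phi}\circ\rho_{\mathcal{L}}$, and evaluating at $\ell$ gives $\pi_{\phi}(\ell^{\star})=\ell^{\star}(\phi)=\phi(\ell)$, so this composite is the identity by a direct computation. For the reverse composite $\mathcal{C}_{Z(\mathcal{L})}\circ Z(\rho_{\mathcal{L}})$, which sends $\Psi$ to $\pi_{\Psi\circ\rho_{\mathcal{L}}}$, I must show $g(\Psi\circ\rho_{\mathcal{L}})=\Psi(g)$ for every $g\in\mathfrak{B}_{A}(Z(\mathcal{L}))$; here I would invoke Theorem 1.3 to write $g=\ell^{\star}=\rho_{\mathcal{L}}(\ell)$, whereupon the left side becomes $(\Psi\circ\rho_{\mathcal{L}})(\ell)=\Psi(\ell^{\star})=\Psi(g)$. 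Statement (4) has the identical shape with $\mathcal{L}=\mathfrak{B}_{A}(X,\mathcal{U})$: the composite $\mathfrak{B}_{A}(\mathcal{C})\circ\rho_{\mathcal{L}}$ sends $f$ to $f^{\star}\circ\mathcal{C}$, which at $x$ yields $\pi_{x}(f)=f(x)$ and so is the identity, while for the reverse composite I would again use surjectivity of $\rho$ to write an arbitrary $h$ as $g^{\star}$ and reduce to the first computation.

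The only genuine obstacle is the return direction of (3) and (4). A naive chase there strands an expression such as $g(\Psi\circ\rho_{\mathcal{L}})$ in which $g$ is merely an abstract uniformly continuous map into $A$ and cannot be simplified as it stands. The decisive observation is that Theorem 1.3 makes $\rho_{\mathcal{L}}$ surjective, so every such $g$ is of the form $\rho_{\mathcal{L}}(\ell)$; after this substitution each of these identities collapses onto the already-verified forward computation. Everything else is bookkeeping with the functoriality of $Z$ and $\mathfrak{B}_{A}$ recorded before the theorem.
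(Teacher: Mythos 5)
Your proof is correct, and for parts (1), (2), and the forward composites $Z(\rho_{\mathcal{L}})\circ\mathcal{C}_{Z(\mathcal{L})}$ and $\mathfrak{B}_{A}(\mathcal{C}_{(X,\mathcal{U})})\circ\rho_{\mathfrak{B}_{A}(X,\mathcal{U})}$ it coincides with the paper's element chases; but your treatment of the reverse composites in (3) and (4) is genuinely different, because the paper never computes them at all. Instead, the paper observes that $Z(\mathcal{L})$ is a complete non-Archimedean $|A|^{+}$-totally bounded space, so $\mathcal{C}_{Z(\mathcal{L})}$ is a uniform homeomorphism by part 2 of Theorem 1.3, and that $\mathfrak{B}_{A}(X,\mathcal{U})$ is partitionable, so $\rho_{\mathfrak{B}_{A}(X,\mathcal{U})}$ is an isomorphism by part 1; once one member of each pair is known bijective, a single one-sided identity forces the pair to be mutually inverse. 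You instead verify both composites directly, invoking only the surjectivity of $\rho_{\mathcal{L}}$ (which holds for \emph{every} $\mathcal{L}\in V(\Omega(A))$, partitionable or not) to write an arbitrary $g\in\mathfrak{B}_{A}(Z(\mathcal{L}))$ as $\ell^{\star}$, after which $g(\Psi\circ\rho_{\mathcal{L}})=(\Psi\circ\rho_{\mathcal{L}})(\ell)=\Psi(\ell^{\star})=\Psi(g)$ settles (3), and in (4) you use the standard observation that a one-sided inverse of a surjection is two-sided; both computations check out, and you correctly noticed that the naive symmetric trick would have required surjectivity of $\mathcal{C}_{Z(\mathcal{L})}$, which is not elementary, so your workaround via $\rho_{\mathcal{L}}$ is the right move. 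The trade-off is this: the paper's route is shorter given its Theorem 1.3, but it leans on the heavier analytic content of that theorem (completeness of $Z(\mathcal{L})$, the embedding and density arguments underlying the isomorphism claims), whereas your route isolates exactly the algebraic ingredient needed, namely the surjectivity half of Theorem 1.3, and renders the unit--counit identities as pure element chases that would survive even in settings where the completeness argument for $\mathcal{C}$ is unavailable.
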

\begin{proof}
\begin{enumerate}
\item Let $x\in X$ and let $g\in\mathfrak{B}_{A}(Y,\mathcal{V})$. Then we have
$$[(Z(\mathfrak{B}_{A}(f))\circ\mathcal{C})(x)](g)=[Z(\mathfrak{B}_{A}(f))(\mathcal{C}(x))](g)$$
$$=[\mathcal{C}(x)\circ\mathfrak{B}_{A}(f)]g=\mathcal{C}(x)[\mathfrak{B}_{A}(f)(g)]$$
$$=\mathcal{C}(x)(g\circ f)=g(f(x))=\mathcal{C}(f(x))(g).$$
Therefore $\mathcal{C}\circ f=Z(\mathfrak{B}_{A}(f))\circ\mathcal{C}$.

\item This proof is analogous to part 1. Let $\ell\in\mathcal{L}$ and let $\theta\in Z(\mathcal{M})$. Then we have
$$[(\mathfrak{B}_{A}(Z(\phi))\circ\rho)(\ell)](\theta)=[\mathfrak{B}_{A}(Z(\phi))(\rho(\ell))](\theta)$$
$$=[\rho(\ell)\circ Z(\phi)]\theta=\rho(\ell)(Z(\phi)(\theta))$$
$$=\rho(\ell)(\theta\circ\phi)=\theta\circ\phi(\ell)=\theta(\phi(\ell))=\rho(\phi(\ell))(\theta).$$

Therefore $\rho\circ\phi=\mathfrak{B}_{A}(Z(\phi))\circ\rho$.

\item The uniform space $Z(\mathcal{L})$ is complete, so $\mathcal{C}_{Z(\mathcal{L})}$ is a uniform
homeomorphism. It therefore suffices to show that $Z(\rho_{\mathcal{L}})\circ\mathcal{C}_{Z(\mathcal{L})}\colon
Z(\mathcal{L})\to Z(\mathcal{L})$
is the identity map. Therefore let $\phi\colon\mathcal{L}\to\Omega(A)$ is a homomorphism and $\ell\in\mathcal{L}$.
Then we have
$$[Z(\rho_{\mathcal{L}})\circ\mathcal{C}_{Z(\mathcal{L})}(\phi)](\ell)
=[Z(\rho_{\mathcal{L}})(\mathcal{C}_{Z(\mathcal{L})}(\phi))](\ell)$$
$$=[\mathcal{C}_{Z(\mathcal{L})}(\phi)\circ\rho_{\mathcal{L}}](\ell)=
\mathcal{C}_{Z(\mathcal{L})}(\phi)(\rho_{\mathcal{L}}(\ell))=\rho_{\mathcal{L}}(\ell)(\phi)=\phi(\ell).$$
We therefore conclude that $Z(\rho_{\mathcal{L}})\circ\mathcal{C}_{Z(\mathcal{L})}$ is the identity map.

\item This proof this analogous to 3. Since $\mathfrak{B}_{A}(X,\mathcal{U})$ is partitionable, we have $\rho_{\mathfrak{B}_{A}(X,\mathcal{U})}$ be an
isomorphism. We therefore need to show that $\mathfrak{B}_{A}(\mathcal{C}_{(X,\mathcal{U})})\circ\rho_{\mathfrak{B}_{A}(X,\mathcal{U})}\colon
\mathfrak{B}_{A}(X,\mathcal{U})\to\mathfrak{B}_{A}(X,\mathcal{U})$
is the identity map. Thus, assume that $f\in\mathfrak{B}_{A}(X,\mathcal{U})$ and $x\in X$. Then
$$[\mathfrak{B}_{A}(\mathcal{C}_{(X,\mathcal{U})})\circ\rho_{\mathfrak{B}_{A}(X,\mathcal{U})}(f)](x)
=[\mathfrak{B}_{A}(\mathcal{C}_{(X,\mathcal{U})})(\rho_{\mathfrak{B}_{A}(X,\mathcal{U})}(f))](x)$$
$$=(\rho_{\mathfrak{B}_{A}(X,\mathcal{U})}(f)\circ\mathcal{C}_{(X,\mathcal{U})})(x)
=\rho_{\mathfrak{B}_{A}(X,\mathcal{U})}(f)(\mathcal{C}_{(X,\mathcal{U})}(x))$$
$$=\mathcal{C}_{(X,\mathcal{U})}(x)(f)=f(x).$$

Therefore $\mathfrak{B}_{A}(\mathcal{C}_{(X,\mathcal{U})})\circ\phi_{\mathfrak{B}_{A}(X,\mathcal{U})}$ is the identity map.
\end{enumerate}
\end{proof}

%%%%%%%%%%%%%%%%%%%%%%%%%%%%%%%%%%%%%%%%%%%%%%%
\section{A Characterization of non-Archimedean Supercomplete Spaces}

A congruence $\theta$ on $\mathcal{L}$ is said to be \emph{partitionable} if
$\mathcal{L}/\theta$ is partitionable. 
Let $PC(\mathcal{L})$ denote the collection of all partitional congruences of $\mathcal{L}$. One can easily
see that $PC(\mathcal{L})$ consists of all congruences of the form $\bigcap_{\theta\in R}\ker(\theta)$ where
$R\subseteq Z(\mathcal{L})$.

\begin{theorem}
Let $\mathcal{L}\in V(\Omega(A))$. Let $R\subseteq Z(\mathcal{L})$. Then let
$\phi\in Z(\mathcal{L})$. Then $\phi\in\overline{R}$ if and only if $\bigcap_{\theta\in R}\ker(\theta)\subseteq\ker(\phi)$.
\end{theorem}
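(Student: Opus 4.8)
The plan is to first convert the topological statement $\phi\in\overline{R}$ into a purely algebraic condition on the values taken by the homomorphisms in $R$. By the theorem establishing that the equivalence relations $\mathcal{E}_{\ell}$ generate the uniformity on $Z(\mathcal{L})$, the family $\{\mathcal{E}_{\ell}\}_{\ell\in\mathcal{L}}$ is downward directed (its proof shows $\mathcal{E}_{\ell}\subseteq\mathcal{E}_{\ell_{1},\dots,\ell_{n}}$ for a suitable single $\ell$), so the blocks $\{\theta\in Z(\mathcal{L}):\theta(\ell)=\phi(\ell)\}$ form a neighborhood base at $\phi$. Hence $\phi\in\overline{R}$ holds if and only if every such basic block meets $R$, that is, if and only if for every $\ell\in\mathcal{L}$ there exists $\theta\in R$ with $\theta(\ell)=\phi(\ell)$. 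I would record this reformulation first, since both implications are proved against it rather than against the closure directly; this is the only place the topology enters.

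For the forward implication, assume $\phi\in\overline{R}$ and take $(\ell_{1},\ell_{2})\in\bigcap_{\theta\in R}\ker(\theta)$; the goal is $\phi(\ell_{1})=\phi(\ell_{2})$. I would encode the pair into a single element: fix an injection $i\colon A^{2}\to A$ and set $\ell=\hat{i}^{\mathcal{L}}(\ell_{1},\ell_{2})$, so that every homomorphism $\psi$ satisfies $\psi(\ell)=i(\psi(\ell_{1}),\psi(\ell_{2}))$. Applying the reformulation to this $\ell$ produces $\theta\in R$ with $\theta(\ell)=\phi(\ell)$, whence $i(\theta(\ell_{1}),\theta(\ell_{2}))=i(\phi(\ell_{1}),\phi(\ell_{2}))$, and injectivity of $i$ gives $\theta(\ell_{j})=\phi(\ell_{j})$ for $j=1,2$. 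Since $\theta(\ell_{1})=\theta(\ell_{2})$ by hypothesis, we conclude $\phi(\ell_{1})=\phi(\ell_{2})$.

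For the converse, assume $\bigcap_{\theta\in R}\ker(\theta)\subseteq\ker(\phi)$ and, aiming at the reformulated condition, fix $\ell\in\mathcal{L}$ and set $a=\phi(\ell)$; I argue by contradiction that some $\theta\in R$ satisfies $\theta(\ell)=a$. Suppose instead $\theta(\ell)\neq a$ for every $\theta\in R$. Since $A$ is infinite there is some $b\neq a$; choose $j\colon A\to A$ with $j(a)=a$ and $j(x)=b$ for all $x\neq a$, and put $\ell_{1}=\hat{j}^{\mathcal{L}}(\ell)$ and $\ell_{2}=\hat{b}^{\mathcal{L}}$. Then for every $\theta\in R$ we get $\theta(\ell_{1})=j(\theta(\ell))=b=\theta(\ell_{2})$, so $(\ell_{1},\ell_{2})\in\bigcap_{\theta\in R}\ker(\theta)$; but $\phi(\ell_{1})=j(a)=a\neq b=\phi(\ell_{2})$, so $(\ell_{1},\ell_{2})\notin\ker(\phi)$, contradicting the assumed inclusion. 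This forces some $\theta\in R$ with $\theta(\ell)=a=\phi(\ell)$, which is exactly the reformulated condition, giving $\phi\in\overline{R}$.

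The substantive content lies entirely in the two encoding tricks, and I expect the main obstacle to be the backward direction: one must manufacture, from the pointwise separation hypothesis $\theta(\ell)\neq a$, a single pair witnessing failure of the kernel inclusion, and the choice of the collapsing map $j$ together with the constant element $\hat{b}^{\mathcal{L}}$ is the one nonobvious ingredient (its existence uses only $|A|\geq 2$). The forward direction is routine once the injective encoding is in hand, and the reformulation step requires only the care of checking that the $\mathcal{E}_{\ell}$-blocks form a genuine base rather than a mere subbase, which is precisely what the generation theorem supplies.
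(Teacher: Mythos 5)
Your proof is correct and takes essentially the same route as the paper's: both directions rest on the characterization of closure through the entourages $\mathcal{E}_{\ell}$, and your backward direction uses the identical collapsing map $j$ together with the constant $\hat{b}^{\mathcal{L}}$ to produce the kernel-violating pair. The only cosmetic differences are that you reduce the forward direction to singleton entourages via an injection $i\colon A^{2}\to A$ where the paper simply invokes the two-element entourage $\mathcal{E}_{\ell,\mathfrak{m}}$ directly, and you phrase the converse as a contradiction rather than the paper's contrapositive.
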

\begin{proof}
$\rightarrow$ Assume $\phi\in\overline{R}$. Also assume $\ell,\mathfrak{m}\in\mathcal{L}$ and $(\ell,\mathfrak{m})\in\bigcap_{\theta\in R}\ker(\theta)$. Then $\theta(\ell)=\theta(\mathfrak{m})$ for $\theta\in R$. Since $\phi\in\overline{R}$, there is a
$\theta\in R$ with $(\phi,\theta)\in\mathcal{E}_{\ell,\mathfrak{m}}$, so
$\phi(\ell)=\theta(\ell)=\theta(\mathfrak{m})=\phi(\mathfrak{m})$. Therefore $(\ell,\mathfrak{m})\in\ker(\phi)$.
We conclude that $\bigcap_{\theta\in R}\ker(\theta)\subseteq\ker(\phi)$.

$\leftarrow$ Assume $\bigcap_{\theta\in R}\ker(\theta)\subseteq\ker(\phi)$. Then let $\ell\in\mathcal{L}$ and
assume $\phi(\ell)=a$. Let $b\in A$ be an element with $b\neq a$. Let $i\colon A\to A$ be the map where
$i(a)=a$ and $i(c)=b$ for $c\neq a$. Then $\phi(\hat{i}^{\mathcal{L}}(\ell))=i(\phi(\ell))=i(a)=a\neq b=\phi(\hat{b}^{\mathcal{L}})$,
so $(\hat{i}^{\mathcal{L}}(\ell),\hat{b}^{\mathcal{L}})\not\in\ker(\phi)$, hence
$(\hat{i}^{\mathcal{L}}(\ell),\hat{b}^{\mathcal{L}})\not\in\ker(\theta)$ for some $\theta\in R$. Therefore
$b=\theta(\hat{b}^{\mathcal{L}})\neq\theta(\hat{i}^{\mathcal{L}}(\ell))=i(\theta(\ell))$. Thus
$\theta(\ell)=a=\phi(\ell)$. Therefore $(\phi,\theta)\in\mathcal{E}_{\ell}$. Since $\ell\in\mathcal{L}$ is
arbitrary, we have $\phi\in\overline{R}$.
\end{proof}
We shall now give a Galois correspondence between closed sets in $Z(\mathcal{L})$ and
partitionable congruences in $\mathcal{L}$. Let
$f\colon P(\mathcal{L}^{2})\to P(Z(\mathcal{L})),g\colon P(Z(\mathcal{L}))\to P(\mathcal{L}^{2})$
be the mappings where \[f(R)=\{\phi\in Z(\mathcal{L})|(a,b)\in\ker(\phi)\,\textrm{for all}\,(a,b)\in R\}
=\{\phi\in Z(\mathcal{L})|R\subseteq\ker(\phi)\}\]
and where \[g(S)=\{(a,b)\in\mathcal{L}^{2}|(a,b)\in\ker(\phi)\,\textrm{for all}\,\phi\in S\}
=\bigcap_{\phi\in S}\ker(\phi).\]

Let $C=g\circ f,D=f\circ g$. Then $C$ and $D$ are closure operators. In other words, we have
$C(R)\subseteq C(C(R))$, and if $R\subseteq S$, then $C(R)\subseteq C(S)$ for $R,S\subseteq\mathcal{L}^{2}$.
Let $C^{*}=\{R\subseteq\mathcal{L}^{2}|C(R)=R\}=\{C(R)|R\subseteq\mathcal{L}^{2}\}$ and let
$D^{*}=\{S\subseteq Z(\mathcal{L})|D(S)=D\}=\{D(S)|S\subseteq Z(\mathcal{L})\}$.
Let $f^{*}\colon C^{*}\to D^{*},g^{*}\colon D^{*}\to C^{*}$ be the restriction of the functions
$f$ and $g$. Then the functions $f^{*}$ and $g^{*}$ are inverse functions.

\begin{theorem}
The mapping $D$ is the topological closure operator induced by the uniformity on $Z(\mathcal{L})$.
\end{theorem}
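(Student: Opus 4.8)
The plan is to reduce the statement directly to the preceding theorem, which already characterizes membership in the topological closure $\overline{R}$ in terms of an inclusion of kernels. The essential content has therefore been done; what remains is to repackage that equivalence in the language of the Galois connection. First I would unfold the definition of $D = f\circ g$ on an arbitrary subset $S\subseteq Z(\mathcal{L})$:
\[ D(S) = f(g(S)) = \{\phi\in Z(\mathcal{L}) \mid g(S)\subseteq\ker(\phi)\}, \]
and since $g(S)=\bigcap_{\theta\in S}\ker(\theta)$ by definition of $g$, this is precisely the set of those $\phi\in Z(\mathcal{L})$ satisfying $\bigcap_{\theta\in S}\ker(\theta)\subseteq\ker(\phi)$.

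Next I would fix $\phi\in Z(\mathcal{L})$ and invoke the preceding theorem with $R=S$. That theorem asserts exactly that $\phi\in\overline{S}$ if and only if $\bigcap_{\theta\in S}\ker(\theta)\subseteq\ker(\phi)$. Comparing this with the description of $D(S)$ obtained above yields $\phi\in D(S)$ if and only if $\phi\in\overline{S}$. Since $\phi$ was arbitrary, this gives $D(S)=\overline{S}$, and since $S$ was an arbitrary subset of $Z(\mathcal{L})$, the operator $D$ agrees pointwise with the map $S\mapsto\overline{S}$; that is, $D$ is the topological closure operator induced by the uniformity on $Z(\mathcal{L})$.

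I expect essentially no obstacle here, as the two nontrivial inclusions relating $\phi\in\overline{R}$ to the containment of kernels were already established in the earlier theorem, and the present claim is a formal consequence. The only bookkeeping worth double-checking is the identification $g(S)=\bigcap_{\theta\in S}\ker(\theta)$, which is immediate from the definition of $g$, together with the fact that the topology referenced in the earlier theorem is indeed the one induced by the uniformity generated by the equivalence relations $\mathcal{E}_{\ell}$ (so that $\overline{S}$ means the same thing in both statements). With that alignment in place, the equivalence transfers verbatim and the proof is complete.
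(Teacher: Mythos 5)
Your proposal is correct and follows essentially the same route as the paper: the paper's proof likewise unfolds $D(R)=f(g(R))=f\bigl(\bigcap_{\theta\in R}\ker(\theta)\bigr)=\{\phi\in Z(\mathcal{L})\mid\bigcap_{\theta\in R}\ker(\theta)\subseteq\ker(\phi)\}$ and identifies this with $\overline{R}$ by the preceding theorem. Your extra remark about checking that $\overline{S}$ refers to the closure in the uniform topology on $Z(\mathcal{L})$ is sound bookkeeping but adds nothing beyond what the paper tacitly assumes.
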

\begin{proof}
Let $R\subseteq Z(\mathcal{L})$. Then \[D(R)=f\circ g(R)
=f(\bigcap_{\theta\in R}\ker(\theta))=\{\phi\in Z(\mathcal{L})|\bigcap_{\theta\in R}\ker(\theta)\subseteq\ker(\phi)\}
=\overline{R}.\]
\end{proof}

If $(X,\mathcal{U})$ is a uniform space, then let $H(X)$ be the collection
of all closed subsets of $X$. Clearly $D^{*}=H(Z(\mathcal{L}))$ and $C^{*}=PC(\mathcal{L})$.
Therefore we have $f^{*}\colon PC(\mathcal{L})\to H(Z(\mathcal{L}))$ and
$g^{*}\colon H(Z(\mathcal{L}))\to PC(\mathcal{L})$. 

We shall now characterize the partitionable algebras $\mathcal{L}$ where $S(\mathcal{L})$ is supercomplete.
For each $E\in\mathcal{U}$, let $\overline{E}$
be the binary relation on $H(X)$ where $(C,D)\in\overline{E}$ if and only if
$C\subseteq E[D]=\{x\in X|(z,x)\in E\,\textrm{for some}\,z\in D\}$ and $D\subseteq E[C]$. Then the relations $\overline{E}$ generate
a uniformity on $H(X)$. Therefore $H(X)$ is a uniform space. With this uniformity, we shall call
$H(X)$ the hyperspace of $X$. A separated uniform space $X$ is said to be \emph{supercomplete} if $H(X)$ is complete.

Take note that if $\mathcal{L}$ is an algebra and $\ell\in\mathcal{L}$, then we have
$\phi\in\mathcal{E}_{\ell}[C]$ if and only if there is some $\theta\in C$ with
$(\theta,\phi)\in\mathcal{E}_{\ell}$. In other words, $\phi\in\mathcal{E}_{\ell}[C]$ if and only if
$\phi(\ell)\in\{\theta(\ell)|\theta\in C\}$. Therefore
$(C,D)\in\overline{\mathcal{E}_{\ell}}$ if and only if $\{\theta(\ell)|\theta\in C\}=\{\phi(\ell)|\phi\in D\}$.

\begin{ex}
Every finitely generated algebra $\mathcal{L}\in V(\Omega(A))$ is generated by a single element.
\end{ex}

A \emph{locally partitionable} congruence is a congruence $\theta$ on $\mathcal{L}$ so that
whenever $\mathcal{M}\subseteq\mathcal{L}$ is a finitely generated subalgebra, we have
$\theta\cap\mathcal{M}^{2}$ be a partitionable congruence. 

Let $LPC(\mathcal{L})$ denote the set of all locally partitionable congruences on $\mathcal{L}$.
$LPC(\mathcal{L})$ is closed under arbitrary intersection, so $LPC(\mathcal{L})$ is a complete lattice.
Let $FS(\mathcal{L})$ be the collection of all finitely generated subalgebras of $\mathcal{L}$. 
We shall now give $LPC(\mathcal{L})$ a complete uniformity by representing $LPC(\mathcal{L})$ as an inverse limit.

If $\mathcal{M},\mathcal{N}$ are finitely generated subalgebras of $\mathcal{L}$ and
$\mathcal{M}\subseteq\mathcal{N}$, then define a function 
$E_{\mathcal{N},\mathcal{M}}\colon PC(\mathcal{N})\to PC(\mathcal{M})$ by letting
$E_{\mathcal{N},\mathcal{M}}(\theta)=\theta\cap\mathcal{M}^{2}$. One can easily show that
$(PC(\mathcal{N}))_{\mathcal{N}\in FS(\mathcal{L})}$ is an inverse system of sets with transitional
mappings $E_{\mathcal{N},\mathcal{M}}$. Let $IL(\mathcal{L})$ be the inverse limit $^{Lim}_{\longleftarrow}PC(\mathcal{N})$. Give each
$PC(\mathcal{N})$ the discrete uniformity and give $^{Lim}_{\longleftarrow}PC(\mathcal{N})$ the inverse limit uniformity.
Let $\mathcal{E}_{\mathcal{N}}$ be the equivalence relation on $IL(\mathcal{L})$ where we have
$(\theta_{\mathcal{M}})_{\mathcal{M}\in FS(\mathcal{L})},(\psi_{\mathcal{M}})_{\mathcal{M}\in FS(\mathcal{L})}\in\mathcal{E}_{\mathcal{N}}$ if and only if $\theta_{\mathcal{N}}=\psi_{\mathcal{N}}$. Then the equivalence relations $\mathcal{E}_{\mathcal{N}}$ generate
the uniformity on $IL(\mathcal{L})$.

Let $\Gamma\colon LPC(\mathcal{L})\to IL(\mathcal{L})$ be the mapping defined by letting
$\Gamma(\theta)=(\theta\cap\mathcal{M}^{2})_{\mathcal{M}\in FS(\mathcal{L})}$. Conversely, define a mapping
$\Delta\colon IL(\mathcal{L})\to LPC(\mathcal{L})$ be the mapping defined by
$\Delta((\theta_{\mathcal{M}})_{\mathcal{M}\in FS(\mathcal{L})})=\bigcup_{\mathcal{M}}\theta_{\mathcal{M}}$.

\begin{ex}
The functions $\Gamma$ and $\Delta$ are inverses.
\end{ex}
Now give $LPC(\mathcal{L})$ the uniformity such that the maps $\Gamma$ and $\Delta$ are uniform
homeomorphisms. Now for each finitely generated subalgebra $\mathcal{N}\subseteq\mathcal{L}$, let
$\mathcal{F}_{\mathcal{N}}$ be the equivalence relation on $LPC(\mathcal{L})$ where
$(\theta,\psi)\in\mathcal{F}_{\mathcal{N}}$ if and only if $\theta\cap\mathcal{N}^{2}=\psi\cap\mathcal{N}^{2}$.
Clearly $(\theta,\psi)\in\mathcal{F}_{\mathcal{N}}$ if and only if
$(\Gamma(\theta),\Gamma(\psi))\in\mathcal{E}_{\mathcal{N}}$. Therefore the equivalence relations
$\mathcal{F}_{\mathcal{N}}$ generate the uniformity on $LPC(\mathcal{L})$.

\begin{ex}
Let $(X,\mathcal{U})$ be a non-Archimedean uniform space.
Let $\mathcal{N}\subseteq\mathfrak{B}_{A}(X,\mathcal{U})$ be a finitely generated subalgebra.
Then there is a partition $P$ such that if $r\colon X\to P$ is the function where $x\in r(x)$ for
all $x\in X$, then $\mathcal{N}=\{f\circ r|f\colon P\to A\}$. Furthermore, if $\theta$ is a partitionable
congruence on $\mathcal{N}$, then there is an $V\subseteq X$ where if $f,g\in\mathcal{N}$, then $(f,g)\in\theta$ if and only if $f(x)=g(x)$
for all $x\in V$.
\end{ex}

\begin{theorem}
Let $\mathcal{L}$ be partitionable. Then $PC(\mathcal{L})$ is dense in $LPC(\mathcal{L})$.
\end{theorem}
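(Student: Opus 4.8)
The plan is to unwind density in $LPC(\mathcal{L})$ and reduce it to a statement about one finitely generated subalgebra at a time. Since the equivalence relations $\mathcal{F}_{\mathcal{N}}$ generate the uniformity on $LPC(\mathcal{L})$ and form a downward directed family---given finitely generated $\mathcal{N}_{1},\mathcal{N}_{2}$, the subalgebra $\mathcal{P}$ generated by $\mathcal{N}_{1}\cup\mathcal{N}_{2}$ is finitely generated and $\mathcal{F}_{\mathcal{P}}\subseteq\mathcal{F}_{\mathcal{N}_{1}}\cap\mathcal{F}_{\mathcal{N}_{2}}$---the blocks $\mathcal{F}_{\mathcal{N}}[\psi]$ constitute a neighborhood basis at each $\psi\in LPC(\mathcal{L})$. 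Thus it suffices to prove: for every $\psi\in LPC(\mathcal{L})$ and every finitely generated subalgebra $\mathcal{N}\subseteq\mathcal{L}$ there is a $\theta\in PC(\mathcal{L})$ with $\theta\cap\mathcal{N}^{2}=\psi\cap\mathcal{N}^{2}$, i.e. with $(\theta,\psi)\in\mathcal{F}_{\mathcal{N}}$. I would first observe that $PC(\mathcal{L})\subseteq LPC(\mathcal{L})$, so that the assertion concerns a genuine subset: if $\theta=\bigcap_{\phi\in R}\ker(\phi)$ then $\theta\cap\mathcal{M}^{2}=\bigcap_{\phi\in R}\ker(\phi|_{\mathcal{M}})$ is partitionable on each finitely generated $\mathcal{M}$.

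Next I would exploit the hypothesis that $\mathcal{L}$ is partitionable to move to a concrete model. By the theorem asserting that $\rho_{\mathcal{L}}\colon\mathcal{L}\to\mathfrak{B}_{A}(Z(\mathcal{L}))$ is an isomorphism for partitionable $\mathcal{L}$, I identify $\mathcal{L}$ with $\mathfrak{B}_{A}(X)$ where $X=Z(\mathcal{L})$, a complete non-Archimedean $|A|^{+}$-totally bounded space; under this identification $\ell$ becomes the map $\ell^{\star}$, $\ell^{\star}(\phi)=\phi(\ell)$. Because $\mathcal{C}_{Z(\mathcal{L})}$ and $Z(\rho_{\mathcal{L}})$ are mutually inverse, the map $x\mapsto\pi_{x}$ is a bijection from $X$ onto $Z(\mathfrak{B}_{A}(X))$, so every homomorphism $\mathfrak{B}_{A}(X)\to\Omega(A)$ is an evaluation $\pi_{x}\colon f\mapsto f(x)$ and a short computation gives $\ker(\phi)\mapsto\ker(\pi_{\phi})=\{(f,g):f(\phi)=g(\phi)\}$ for $\phi\in X$. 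Consequently a partitionable congruence $\bigcap_{x\in R}\ker(\pi_{x})$ is exactly the agreement relation $\{(f,g)\in\mathfrak{B}_{A}(X)^{2}:f|_{R}=g|_{R}\}$ for a subset $R\subseteq X$; these, as $R$ ranges over subsets of $X$, are precisely the members of $PC(\mathcal{L})$.

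With the model fixed the construction is direct. Since $\psi$ is locally partitionable, $\psi\cap\mathcal{N}^{2}$ is a partitionable congruence on the finitely generated subalgebra $\mathcal{N}\subseteq\mathfrak{B}_{A}(X)$; applying the Exercise describing partitionable congruences of finitely generated subalgebras of $\mathfrak{B}_{A}(X)$ yields a set $V\subseteq X$ such that, for $f,g\in\mathcal{N}$, one has $(f,g)\in\psi\cap\mathcal{N}^{2}$ if and only if $f|_{V}=g|_{V}$. I then take $\theta=\bigcap_{x\in V}\ker(\pi_{x})=\{(f,g)\in\mathfrak{B}_{A}(X)^{2}:f|_{V}=g|_{V}\}$, which by the previous paragraph lies in $PC(\mathcal{L})$. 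Restricting to $\mathcal{N}$ gives $\theta\cap\mathcal{N}^{2}=\{(f,g)\in\mathcal{N}^{2}:f|_{V}=g|_{V}\}=\psi\cap\mathcal{N}^{2}$, so $(\theta,\psi)\in\mathcal{F}_{\mathcal{N}}$ and the reduction is complete.

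The step carrying the real weight is the local-to-global passage: the point is that the same subset $V\subseteq X$ coding the partitionable congruence $\psi\cap\mathcal{N}^{2}$ on $\mathcal{N}$ also defines, verbatim, an honest partitionable congruence on all of $\mathcal{L}$, so there is simply no obstruction to extending past $\mathcal{N}$. This is forced by the structural fact that the partitionable congruences of $\mathfrak{B}_{A}(X)$ are exactly the agreement-on-a-subset relations. The one place demanding care is the clean identification of $Z(\mathfrak{B}_{A}(X))$ with $X$ through evaluation maps, since it is this identification that guarantees the agreement relation on $V$ really has the form $\bigcap_{x\in V}\ker(\pi_{x})$ required of a partitionable congruence; everything else is the routine restriction bookkeeping indicated above.
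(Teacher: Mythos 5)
Your proposal is correct and follows essentially the same route as the paper's proof: identify $\mathcal{L}$ with $\mathfrak{B}_{A}(X,\mathcal{U})$ for $X=Z(\mathcal{L})$ via $\rho$, invoke the exercise describing partitionable congruences of finitely generated subalgebras of $\mathfrak{B}_{A}(X,\mathcal{U})$ to obtain $V\subseteq X$, and extend to the agreement congruence $V^{\sharp}=\bigcap_{x\in V}\ker(\pi_{x})\in PC(\mathcal{L})$, which meets $\theta$ in the $\mathcal{F}_{\mathcal{N}}$-block. The additional details you supply (directedness of the relations $\mathcal{F}_{\mathcal{N}}$, the evaluation-map identification of $Z(\mathfrak{B}_{A}(X,\mathcal{U}))$ with $X$) merely make explicit what the paper leaves implicit.
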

\begin{proof}
Since $\mathcal{L}$ is partitionable, we may assume that $\mathcal{L}=\mathfrak{B}_{A}(X,\mathcal{U})$ for
some complete non-Archimedean $|A|^{+}$-totally bounded uniform space $(X,\mathcal{U})$. Let
$\theta\in LPC(\mathcal{L})$ and assume that $\mathcal{N}\subseteq\mathfrak{B}_{A}(X,\mathcal{U})$ is finitely
generated. Then there is a $V\subseteq X$ where for $f,g\in\mathcal{N}$, we have $(f,g)\in\theta$ if and only if
$f(x)=g(x)$ for all $x\in V$. Now let $V^{\sharp}$ be the congruence in $\mathfrak{B}_{A}(X,\mathcal{U})$
where $(f,g)\in V^{\sharp}$ if and only if $f(x)=g(x)$ for $x\in V$. Then $V^{\sharp}$ is a partitionable
congruence with $V^{\sharp}\cap\mathcal{N}^{2}=\theta\cap\mathcal{N}^{2}$. Therefore
$(V^{\sharp},\theta)\in\mathcal{F}_{\mathcal{N}}$. We conclude that $PC(\mathcal{L})$ is dense in $LPC(\mathcal{L})$.
\end{proof}

\begin{ex}
Assume $a_{i}\in A$ for $i\in I$ and $b_{j}\in A$ for $j\in J$. Then $\{a_{i}|i\in I\}=\{b_{j}|j\in J\}$ if and only if
for each pair of functions $f,g\colon A\to A$, we have $\forall i\in I,f(a_{i})=g(a_{i})\Leftrightarrow
\forall j\in J,f(b_{j})=g(b_{j})$.
\end{ex}
\begin{theorem}
The mappings $f^{*}\colon PC(\mathcal{L})\to H(Z(\mathcal{L}))$ and $g^{*}\colon H(Z(\mathcal{L}))\to PC(\mathcal{L})$
are uniform homeomorphisms.
\end{theorem}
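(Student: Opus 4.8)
The plan is to exploit the fact, established in the Galois-correspondence discussion following the closure-operator theorem, that $f^{*}$ and $g^{*}$ are already mutually inverse bijections between $PC(\mathcal{L})=C^{*}$ and $H(Z(\mathcal{L}))=D^{*}$. Since they are inverse bijections, it suffices to produce a base of generating entourages on each side and to show that $f^{*}\times f^{*}$ carries one base exactly onto the other; uniform continuity of $f^{*}$ and of $g^{*}=(f^{*})^{-1}$ will then both fall out of a single ``if and only if''. The entourages I would use are the relations $\overline{\mathcal{E}_{\ell}}$ on $H(Z(\mathcal{L}))$ and the relations $\mathcal{F}_{\langle\ell\rangle}$ on $PC(\mathcal{L})$ (with the subspace uniformity inherited from $LPC(\mathcal{L})$, noting that each partitionable congruence is locally partitionable), where $\langle\ell\rangle$ denotes the subalgebra generated by a single element $\ell$.

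First I would record that each of these families is a base. For the hyperspace: the equivalence relations $\mathcal{E}_{\ell}$ are a base for the uniformity of $Z(\mathcal{L})$, and since $E'\subseteq E$ implies $\overline{E'}\subseteq\overline{E}$, passing to closures sends a base of the uniformity of $Z(\mathcal{L})$ to a base of the hyperspace uniformity, so the $\overline{\mathcal{E}_{\ell}}$ generate. For $PC(\mathcal{L})$: the relations $\mathcal{F}_{\mathcal{N}}$ generate, and because every finitely generated subalgebra of an algebra in $V(\Omega(A))$ is generated by a single element, the $\mathcal{F}_{\langle\ell\rangle}$ are cofinal among them. I would also note the explicit description $\langle\ell\rangle=\{\hat{g}^{\mathcal{L}}(\ell)\mid g\colon A\to A\}$, this set being closed under every operation because $\mathcal{L}$ satisfies the identities $\hat{f}(\hat{g_{1}}(x),\dots,\hat{g_{n}}(x))=\hat{k}(x)$, with $k(t)=f(g_{1}(t),\dots,g_{n}(t))$, that hold in $\Omega(A)$.

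The heart of the argument is the equivalence, for $\alpha,\beta\in PC(\mathcal{L})$ and $\ell\in\mathcal{L}$, written $S=f^{*}(\alpha)$ and $T=f^{*}(\beta)$:
$$(\alpha,\beta)\in\mathcal{F}_{\langle\ell\rangle}\quad\Longleftrightarrow\quad(S,T)\in\overline{\mathcal{E}_{\ell}}.$$
By the computation already made for $\overline{\mathcal{E}_{\ell}}$, the right-hand side says $\{\phi(\ell)\mid\phi\in S\}=\{\phi(\ell)\mid\phi\in T\}$. I would then apply the exercise on equal images of indexed families to these two subsets of $A$: their equality is equivalent to the statement that for every pair $g,h\colon A\to A$,
$$\big[\forall\,\phi\in S,\ g(\phi(\ell))=h(\phi(\ell))\big]\ \Longleftrightarrow\ \big[\forall\,\phi\in T,\ g(\phi(\ell))=h(\phi(\ell))\big].$$
Since $g(\phi(\ell))=\phi(\hat{g}^{\mathcal{L}}(\ell))$, the left bracket asserts $(\hat{g}^{\mathcal{L}}(\ell),\hat{h}^{\mathcal{L}}(\ell))\in\bigcap_{\phi\in S}\ker(\phi)=g(f(\alpha))=C(\alpha)=\alpha$, the last equality because $\alpha$ is $C$-closed; likewise $T$ gives $\beta$. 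Hence the displayed equivalence reads: for all $g,h$, $(\hat{g}^{\mathcal{L}}(\ell),\hat{h}^{\mathcal{L}}(\ell))\in\alpha$ iff $(\hat{g}^{\mathcal{L}}(\ell),\hat{h}^{\mathcal{L}}(\ell))\in\beta$, which by the description of $\langle\ell\rangle$ is exactly $\alpha\cap\langle\ell\rangle^{2}=\beta\cap\langle\ell\rangle^{2}$, i.e. $(\alpha,\beta)\in\mathcal{F}_{\langle\ell\rangle}$.

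With this equivalence, $f^{*}$ pulls each basic entourage $\overline{\mathcal{E}_{\ell}}$ back to the basic entourage $\mathcal{F}_{\langle\ell\rangle}$, so $f^{*}$ is uniformly continuous, and reading the same equivalence through $g^{*}=(f^{*})^{-1}$ shows $g^{*}$ is uniformly continuous; therefore both are uniform homeomorphisms. The step I expect to be the main obstacle is the translation through the indexed-image exercise: one must verify that the universal quantifier over $\phi\in S$ genuinely collapses into membership of a single pair in the \emph{closed} congruence $\alpha$, which is precisely where $C$-closedness ($\alpha\in PC(\mathcal{L})$, not merely $\alpha\in LPC(\mathcal{L})$) is essential. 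By comparison, identifying $\langle\ell\rangle$ with $\{\hat{g}^{\mathcal{L}}(\ell)\mid g\colon A\to A\}$ and checking the base claims are routine.
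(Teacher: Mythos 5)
Your proposal is correct and follows essentially the same route as the paper: both arguments reduce to the single chain of equivalences showing, via the indexed-image exercise and the identity $g(\phi(\ell))=\phi(\hat{g}^{\mathcal{L}}(\ell))$, that the basic entourage $\overline{\mathcal{E}_{\ell}}$ on $H(Z(\mathcal{L}))$ corresponds exactly to $\mathcal{F}_{\langle\ell\rangle}$ on $PC(\mathcal{L})$ under the inverse pair $f^{*},g^{*}$. The only differences are cosmetic: you run the equivalence through $f^{*}$ (invoking $C$-closedness $g(f(\alpha))=\alpha$) where the paper computes directly with $g^{*}(C)=\bigcap_{\phi\in C}\ker(\phi)$, and you spell out the base and cofinality facts (the $\overline{\mathcal{E}_{\ell}}$ generating $H(Z(\mathcal{L}))$, single-element generation giving cofinality of the $\mathcal{F}_{\langle\ell\rangle}$, and $\langle\ell\rangle=\{\hat{g}^{\mathcal{L}}(\ell)\mid g\colon A\to A\}$) that the paper leaves implicit.
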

\begin{proof}
We only need to show that $g^{*}$ is a uniform homeomorphism. Since $Z(\mathcal{L})$ is generated by equivalence relations
$\mathcal{E}_{\ell}$, the equivalence relations $\overline{\mathcal{E}_{\ell}}$ generate $H(Z(\mathcal{L}))$. We have
$(C,D)\in\overline{\mathcal{E}_{\ell}}$ if and only if
\[\{\theta(\ell)|\theta\in C\}=\{\theta(\ell)|\theta\in D\}\] if and only if for $f,g\colon A\to A$ we have
\[\forall\phi\in C,f(\phi(\ell))=g(\phi(\ell))\leftrightarrow\forall\phi\in D,f(\phi(\ell))=g(\phi(\ell))\]
if and only if for each $f,g\colon A\to A$ we have
\[\forall\phi\in C,\phi(\hat{f}^{\mathcal{L}}(\ell)))=\phi(\hat{g}^{\mathcal{L}}(\ell))\leftrightarrow
\forall\phi\in D,\phi(\hat{f}^{\mathcal{L}}(\ell)))=\phi(\hat{g}^{\mathcal{L}}(\ell))\]
if and only if whenever $f,g\colon A\to A$ we have
\[(\hat{f}^{\mathcal{L}}(\ell),\hat{g}^{\mathcal{L}}(\ell))\in\bigcap_{\phi\in C}\ker(\phi)\leftrightarrow
(\hat{f}^{\mathcal{L}}(\ell),\hat{g}^{\mathcal{L}}(\ell))\in\bigcap_{\phi\in D}\ker(\phi)\] if and only if
\[g^{*}(C)\cap\langle\ell\rangle^{2}=\bigcap_{\phi\in C}\ker(\phi)\cap\langle\ell\rangle^{2}=\bigcap_{\phi\in D}\ker(\phi)\cap\langle\ell\rangle^{2}=
g^{*}(D)\cap\langle\ell\rangle^{2}\] if and only if $(g^{*}(C),g^{*}(D))\in\mathcal{F}_{\langle\ell\rangle}$.
Therefore $g^{*}$ is a uniform homeomorphism.
\end{proof}
\begin{theorem}
Let $\mathcal{L}$ be a partitionable algebra. Then $Z(\mathcal{L})$ is supercomplete if and only if
every locally partitionable congruence on $\mathcal{L}$ is partitionable.
\end{theorem}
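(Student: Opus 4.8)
The plan is to translate the topological condition of supercompleteness into the purely algebraic statement by running everything through the uniform homeomorphism $g^{*}$ of the previous theorem. By definition $Z(\mathcal{L})$ is supercomplete exactly when $H(Z(\mathcal{L}))$ is complete, and since $g^{*}\colon H(Z(\mathcal{L}))\to PC(\mathcal{L})$ is a uniform homeomorphism and completeness is a uniform invariant, this happens if and only if $PC(\mathcal{L})$ (with the subspace uniformity inherited from $LPC(\mathcal{L})$, generated by the relations $\mathcal{F}_{\langle\ell\rangle}$) is complete. So the whole problem reduces to deciding when $PC(\mathcal{L})$ is a complete uniform space.

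To decide this, I would exhibit $PC(\mathcal{L})$ as a dense subspace of a complete space. First, $LPC(\mathcal{L})$ is complete: via the uniform homeomorphisms $\Gamma,\Delta$ it is identified with the inverse limit $IL(\mathcal{L})={}^{Lim}_{\longleftarrow}PC(\mathcal{N})$ of the discrete spaces $PC(\mathcal{N})$. Each discrete uniform space is complete and separated, an arbitrary product of complete separated spaces is complete and separated, and $IL(\mathcal{L})$ is a closed subset of $\prod_{\mathcal{N}}PC(\mathcal{N})$ (the compatibility conditions $E_{\mathcal{N},\mathcal{M}}(\theta_{\mathcal{N}})=\theta_{\mathcal{M}}$ cut out a closed set, since each factor is discrete); hence $IL(\mathcal{L})$, and therefore $LPC(\mathcal{L})$, is complete and separated. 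Second, the previous theorem gives, since $\mathcal{L}$ is partitionable, that $PC(\mathcal{L})$ is dense in $LPC(\mathcal{L})$.

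With these in hand the argument closes quickly. In a separated uniform space a complete subspace is closed, so if $PC(\mathcal{L})$ is complete then it is a closed dense subset of $LPC(\mathcal{L})$ and therefore $PC(\mathcal{L})=LPC(\mathcal{L})$; conversely, if $PC(\mathcal{L})=LPC(\mathcal{L})$ then $PC(\mathcal{L})$ is complete because $LPC(\mathcal{L})$ is. Thus $PC(\mathcal{L})$ is complete if and only if $PC(\mathcal{L})=LPC(\mathcal{L})$. Finally I would record the inclusion $PC(\mathcal{L})\subseteq LPC(\mathcal{L})$: if $\theta$ is partitionable and $\mathcal{M}$ is a finitely generated subalgebra, then $\mathcal{M}/(\theta\cap\mathcal{M}^{2})$ embeds into $\mathcal{L}/\theta$ as a subalgebra of a partitionable algebra, hence is partitionable, so $\theta\cap\mathcal{M}^{2}$ is partitionable. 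Given this inclusion, $PC(\mathcal{L})=LPC(\mathcal{L})$ is equivalent to $LPC(\mathcal{L})\subseteq PC(\mathcal{L})$, i.e. to the statement that every locally partitionable congruence is partitionable, which chains back through the equivalences to $Z(\mathcal{L})$ being supercomplete.

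I expect the main obstacle to be not the final assembly but rather making sure the uniform structures line up: one must confirm that the subspace uniformity that $PC(\mathcal{L})$ inherits from $LPC(\mathcal{L})$ (generated by the $\mathcal{F}_{\mathcal{N}}$) is the same uniformity that $g^{*}$ transports from $H(Z(\mathcal{L}))$, and that $LPC(\mathcal{L})$ really is complete and separated as claimed. Once the spaces are correctly identified, the heart of the proof is just the standard fact that a dense subspace of a complete separated space is complete precisely when it is the whole space.
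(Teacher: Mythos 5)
Your proposal is correct and follows essentially the same route as the paper: transport supercompleteness through the uniform homeomorphism $g^{*}$ to completeness of $PC(\mathcal{L})$, then use density of $PC(\mathcal{L})$ in the complete space $LPC(\mathcal{L})$ to conclude that completeness holds exactly when $PC(\mathcal{L})=LPC(\mathcal{L})$. The details you supply that the paper leaves implicit --- the completeness and separation of $LPC(\mathcal{L})$ as a closed subspace of a product of discrete spaces, the inclusion $PC(\mathcal{L})\subseteq LPC(\mathcal{L})$ via subalgebras of partitionable algebras, and the matching of the $\mathcal{F}_{\langle\ell\rangle}$ uniformity with the subspace uniformity (using that finitely generated subalgebras are singly generated) --- are all accurate and worth recording.
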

\begin{proof}
However, since $H(Z(\mathcal{L}))$ is uniformly
homeomorphic to $PC(\mathcal{L})$, we have $H(Z(\mathcal{L}))$ be complete if and only if $PC(\mathcal{L})$ is complete. Since
$PC(\mathcal{L})$ is a dense subspace of the complete space $LPC(\mathcal{L})$, we have $PC(\mathcal{L})$ be complete
if and only if $PC(\mathcal{L})=LPC(\mathcal{L})$ if and only if each locally partitionable congruence on $\mathcal{L}$ is
partitionable. Therefore $Z(\mathcal{L})$ is supercomplete if and only if every locally partitionable congruence
on $\mathcal{L}$ is partitionable.
\end{proof}

\begin{ex}
A partitionable algebra $\mathcal{L}$ is finitely generated if and only if
$Z(\mathcal{L})$ is discrete. A partitionable algebra $\mathcal{L}$ is countably
generated if and only if $Z(\mathcal{L})$ is uniformizable by a metric.
\end{ex}
We shall now prove a purely algebraic result using hyperspaces.
\begin{cor}
If $\mathcal{L}$ is a countably generated partitionable algebra, then every locally
partitionable congruence is partitionable.
\end{cor}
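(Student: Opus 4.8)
The plan is to deduce the statement from the preceding theorem characterizing supercompleteness: since $\mathcal{L}$ is partitionable, it suffices to prove that $Z(\mathcal{L})$ is supercomplete, for then every locally partitionable congruence on $\mathcal{L}$ is partitionable. To establish supercompleteness I would first invoke the Exercise asserting that a countably generated partitionable algebra has metrizable dual: because $\mathcal{L}$ is countably generated, the uniformity on $Z(\mathcal{L})$ has a countable base, and since $Z(\mathcal{L})$ is non-Archimedean this base may be taken to be a decreasing sequence of equivalence relations $E_0\supseteq E_1\supseteq\cdots$ whose intersection is the diagonal. The induced entourages $\overline{E_n}$ then form a countable base for the hyperspace uniformity, so $H(Z(\mathcal{L}))$ is itself separated and metrizable; hence it suffices to show that every Cauchy sequence of nonempty closed sets in $H(Z(\mathcal{L}))$ converges. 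Recall also that $Z(\mathcal{L})$ is complete, being a closed subspace of $A^{\mathcal{L}}$.

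So let $(C_k)$ be a Cauchy sequence in $H(Z(\mathcal{L}))$. Using the remark preceding the Exercise that $(C,D)\in\overline{E_n}$ exactly when $C$ and $D$ meet the same $E_n$-blocks, I would choose an increasing sequence of thresholds $K_n$ so that for all $j,k\ge K_n$ the sets $C_j$ and $C_k$ meet precisely the same family $\mathcal{B}_n$ of $E_n$-blocks. Since each $E_{n+1}$-block lies in a unique $E_n$-block, sending an $E_{n+1}$-block to the $E_n$-block containing it defines refinement maps $r_n\colon\mathcal{B}_{n+1}\to\mathcal{B}_n$; inspecting a single $C_k$ with $k\ge K_{n+1}$ shows each $r_n$ is well defined and surjective. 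Thus $(\mathcal{B}_n,r_n)$ is an inverse system of nonempty sets over $\omega$ with surjective bonding maps.

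The candidate limit is the set $C=\{x\in Z(\mathcal{L}):\text{the }E_n\text{-block of }x\text{ lies in }\mathcal{B}_n\text{ for every }n\}$, which is closed, being an intersection of clopen ($E_n$-saturated) sets. The crux is to show $C$ is nonempty, and more generally that $C$ meets every block of $\mathcal{B}_n$. Given $B\in\mathcal{B}_n$, surjectivity of $r_n,r_{n+1},\dots$ lets me lift $B$ to a descending chain of nonempty blocks $B=B_n\supseteq B_{n+1}\supseteq\cdots$ with $B_m\in\mathcal{B}_m$, which together with the $E_m$-superblocks of $B$ for $m<n$ yields a thread $(B_m)_{m\ge 0}$. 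Because each $B_m$ is an $E_m$-block and $\bigcap_m E_m$ is the diagonal, the nested nonempty closed sets $B_m$ generate a Cauchy filter; completeness of $Z(\mathcal{L})$ produces a point $x$ lying in every $B_m$, and this $x$ lies in $C\cap B$. Finally, since $C$ meets exactly the blocks in $\mathcal{B}_n$ and so does $C_k$ for $k\ge K_n$, I obtain $(C_k,C)\in\overline{E_n}$ for all $k\ge K_n$; as $n$ is arbitrary, $C_k\to C$, which establishes completeness of $H(Z(\mathcal{L}))$ and hence supercompleteness of $Z(\mathcal{L})$.

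I expect the main obstacle to be precisely the nonemptiness of $C$: it is here that countable generation is indispensable, since it is what lets the families $\mathcal{B}_n$ be organized into a single descending tower whose mesh tends to the diagonal, so that completeness of $Z(\mathcal{L})$ can be applied. The bookkeeping around the thresholds $K_n$ and the verification that the refinement maps remain surjective on the eventually met blocks are the delicate points; without a countable cofinal tower, that is, for a merely $|A|^{+}$-totally bounded complete dual, this threading breaks down, which is consistent with supercompleteness failing in general.
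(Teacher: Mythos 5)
Your proposal is correct, but it takes a genuinely different route from the paper's. Both arguments make the same reduction: since $\mathcal{L}$ is partitionable, the theorem characterizing supercompleteness reduces the corollary to showing that $Z(\mathcal{L})$ is supercomplete, and both exploit countable generation to get a countable base of equivalence relations for the (complete) uniformity on $Z(\mathcal{L})$. At that point the paper finishes in one line: it invokes the Exercise that $Z(\mathcal{L})$ is metrizable, recalls that $Z(\mathcal{L})$ is complete as a closed subspace of $A^{\mathcal{L}}$, and cites Isbell (p.~30) for the general theorem that every complete metric space is supercomplete. You instead prove the needed completeness of $H(Z(\mathcal{L}))$ from scratch: you pick a decreasing base $E_0\supseteq E_1\supseteq\cdots$, translate $\overline{E_n}$-closeness into ``meeting the same $E_n$-blocks'' (exactly the paper's remark before the Exercises), organize the eventually-met block families $\mathcal{B}_n$ into an inverse system with surjective refinement maps, and thread through it, using completeness of $Z(\mathcal{L})$ on each descending chain of clopen blocks to show the candidate limit $C$ meets every block of every $\mathcal{B}_n$. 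This argument is sound; the only unremarked points are routine (Cauchy sequences involving $\emptyset$, which is uniformly isolated in $H(Z(\mathcal{L}))$, and the standard reduction from Cauchy filters to Cauchy sequences in a countably based uniformity). What the paper's approach buys is brevity, at the price of an external citation to a substantially harder general theorem; what yours buys is self-containedness --- in effect a direct proof of the non-Archimedean metrizable case of Isbell's result, where the block structure makes the hyperspace argument elementary --- and it also exposes precisely where countability enters, namely in arranging the block families into a single $\omega$-indexed tower, which is consistent with the corollary's failure to extend to arbitrary partitionable algebras.
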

\begin{proof}
If $\mathcal{L}$ is a countably generated partitionable algebra, then $Z(\mathcal{L})$ is uniformizable
by a metric. However, in \cite{I}[p.\,\,30], it is shown that every complete metric space
is supercomplete. Therefore since $Z(\mathcal{L})$ is supercomplete, every locally
partitionable congruence is partitionable.
\end{proof}

% ----------------------------------------------------------------


\begin{thebibliography}{HD}
\bibitem{B}
Burris, Stanley, and H. P. Sankappanavar. A Course in Universal Algebra. New York: Springer-Verlag, 1981. www.math.uwaterloo.ca/~snburris.

\bibitem{I}
Isbell, J. R. Uniform Spaces,. Providence: American Mathematical Society, 1964. 

\bibitem{J}
James, I. M. Topological and Uniform Spaces. New York: Springer-Verlag, 1987.

\bibitem{S}
Stone, M. H. Applications of the theory of Boolean rings to general topology. Trans. Amer. Math. Soc. 41 (1937), no. 3, 375--481. 
\end{thebibliography}
\end{document}